\documentclass{amsart}

\usepackage{graphicx}

\newtheorem*{theorem*}{Theorem}
\newtheorem{theorem}{Theorem}[section]
\newtheorem{lemma}[theorem]{Lemma}
\newtheorem{proposition}[theorem]{Proposition}

\begin{document}

\title{Non left-orderable surgeries on negatively twisted torus knots}

\author{Kazuhiro Ichihara}
\address{Department of Mathematics, 
College of Humanities and Sciences, Nihon University,
3-25-40 Sakurajosui, Setagaya-ku, Tokyo 156-8550, Japan}
\email{ichihara@math.chs.nihon-u.ac.jp}
\thanks{Ichihara was partially supported by JSPS KAKENHI Grant Number 26400100.}

\author{Yuki Temma}
\address{MY.TECH CO., LTD.}
%\email{s6113M12@math.chs.nihon-u.ac.jp}

\begin{abstract}
We show that certain negatively twisted torus knots admit Dehn surgeries yielding 3-manifolds with non left-orderable fundamental groups. 
\end{abstract}

\keywords{Dehn surgery, left-orderable group, twisted torus knots}

\subjclass[2010]{Primary 57M50; Secondary 57M25}

\date{\today}

\maketitle

\section{Introduction}

In \cite{BGW}, Boyer, Gordon and Watson raised the following conjecture, which is now called the \textit{L-space conjecture}: 
An irreducible rational homology 3-sphere is an L-space if and only if its fundamental group is not left-orderable. 
Here 
a rational homology 3-sphere is called an \textit{L-space} if rk$\widehat{HF}(M) = |H_1(M, \mathbb{Z})|$ holds, 
and 
a non-trivial group $G$ is called \textit{left-orderable} if there exists a strict total order $<$ on $G$ such that if $g < h$, then $f g < f h$ holds for any $f , g , h \in G$. 
See \cite{BGW} for more details. 

One of the known approaches to the conjecture is by using Dehn surgery, for it gives a simple way to construct many L-spaces at once. 
For example, in \cite{OS}, it is shown that a knot $K$ in the 3-sphere $S^3$ admits a Dehn surgery yielding an L-space, then any Dehn surgery on $K$ along a slope $r$ with $r \ge 2 g(K) -1$ yields an L-space, where $g(K)$ denotes the genus of $K$. 

Thus it is natural to ask if Dehn surgeries on a knot yielding 3-manifolds with non left-orderable fundamental groups give L-spaces. 

There are several known results on this line. 
For example, recall that finite groups cannot be left-orderable, and 
3-manifolds with finite fundamental groups are known to be L-spaces. 
Thus a good target for such study is the class of knots admitting Dehn surgeries creating 3-manifolds with finite fundamental groups. 
A well-known class of the knots with such surgeries is that of twisted torus knots, originally given by Dean \cite{D}. 

Here a \textit{(positively) twisted torus knot} in $S^3$ is defined as the knot obtained from a torus knot by adding (positive) full twists along an adjacent pair of strands. 
See Figure \ref{twisted} for example. 

\begin{figure}[htb]
\centering
\includegraphics[bb = 0 0 415 183, scale=.3]{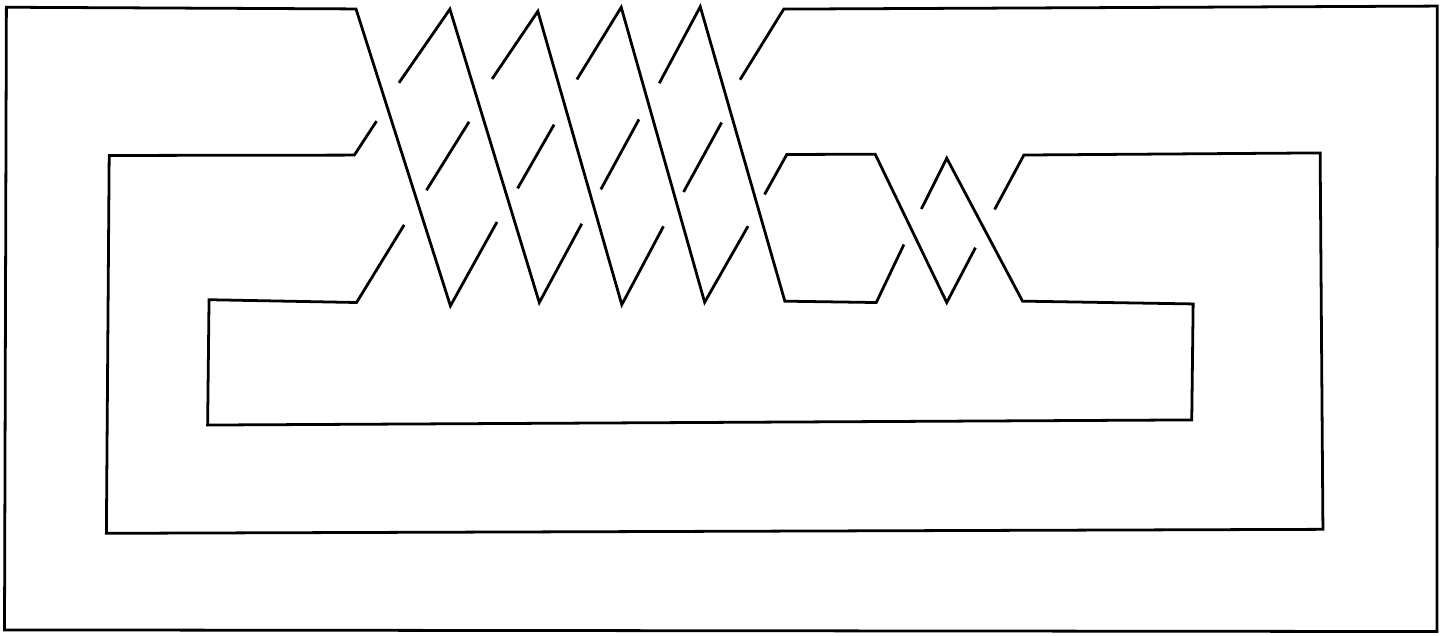}
\caption{$1$-twisted torus knot of type $(3, 5)$}\label{twisted}
\end{figure}

In fact, Clay and Watson considered Dehn surgeries on positively twisted torus knots in \cite{CW}. 
Precisely, they showed that $r$-surgery on a positively $s$-twisted $(3, 3k + 2)$-torus knot gives a 3-manifold with non left-orderable fundamental group if (1) $s \ge 0$ and $k=1$, or (2) $s=1$ and $k \ge 0$. 

This can be comparable to the result by Vafaee \cite{V} showing that such positively twisted torus knots admit L-space surgery. 
Precisely, he showed that $r$-surgery on 
$(p, k p \pm 1)$-torus knots with $s$ full twists on $u$ adjacent strands 
gives an L-space if (1) $u=p-1$, (2) $u=p-2$ and $s=1$, or (3) $u=2$ and $s=1$, for $p \ge 2$, $k \ge 1$, $s>0$, and $0 < u<p$. 
In the case of $p=3$, a positively $2$-twisted $(3, q)$-torus knot ($q>0$, $s \ge 1$) is of genus $q+s-1$, and so, it follows that $r$-surgery on that knot gives a 3-manifold with non left-orderable fundamental group if $r \ge 2q + 2s -3$.

In \cite{IT}, by refining the argument of Nakae \cite{N} for pretzel knots, we have extended the result of Clay and Watson to show that 
$r$-surgery on a positively $s$-twisted torus knot of type $(3, 3v + 2)$ with $s, v \ge 0$ yields a closed 3-manifold with non left-orderable fundamental group if $ r \ge 3 ( 3v+2) + 2s $. 

Soon after, Christianson, Goluboff, Hamann, and Varadaraj gave further extensions in \cite{CGHV}. 
They actually show that $r$-surgery on $(p, k p \pm 1)$-torus knots with $s$ full twists on $u$ adjacent strands gives a closed 3-manifold with non left-orderable fundamental group 
(1) $u=p-1$ and $r \ge p ( pk \pm 1) + (p-1)^2 s$, or (2) $u=p-2$, $s=1$ and $r \ge p ( pk \pm 1) + (p-2)^2$, for $p \ge 2$, $k \ge 1$, $s>0$, and $0 < u<p$. 
As an immediate corollary, it follows that $r$-surgery on a positively $s$-twisted torus knot of type $(3, q)$ with $u, q \ge 0$ yields a closed 3-manifold with non left-orderable fundamental group if $ r \ge 3 q + 4 s $ for $s>1$ and if $ r \ge 3 q + 1$ for $s=1$. 

\medskip

We remark that all the above results concern only positively twisted torus knots. 
Thus it seems natural to ask what happens for \textit{negatively} twisted torus knots. 
In this paper, we show the following. 

\begin{theorem}\label{thm1}
Let $K$ be the $(-1)$-twisted $(3, 3v + 2)$-torus knot with $v \ge 0$. 
Then $r$-surgery on $K$ yields a 3-manifold with non left-orderable fundamental group if $r \ge 3 ( 3v+2) -2$. 
\end{theorem}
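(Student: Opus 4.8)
The plan is to follow the strategy developed by Nakae \cite{N} and Clay--Watson \cite{CW}, as refined in \cite{IT} and \cite{CGHV}, adapting it to account for the reversed crossings produced by the negative twist. First I would realize $K$ as the closure of the $3$-braid $\beta = (\sigma_1 \sigma_2)^{3v+2}\sigma_1^{-2}$, the factor $\sigma_1^{-2}$ encoding the single negative full twist on two adjacent strands. From this braid word a Wirtinger-type presentation of $\pi_1(S^3 \setminus K)$ with three meridional generators $x_1, x_2, x_3$ is read off, and after Tietze moves I expect to reduce it to a two-generator one-relator presentation $\langle a, b \mid R \rangle$ with $a, b$ meridional. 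Along the way I would record the meridian $\mu$ and the Seifert-framed longitude $\lambda$ as explicit words in $a, b$; since the framing of a twisted torus knot differs from that of the underlying torus knot by the writhe contribution of the added twist, this bookkeeping must be done carefully.

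Next I would adjoin the surgery relation. For a slope $r = p/q$ the surgered group is $\pi_1(M_r) = \langle a, b \mid R,\ \mu^{p}\lambda^{q} \rangle$, so the hypothesis $r \ge 3(3v+2)-2 = 9v+4$ becomes a lower bound on the ratio of the meridional to the longitudinal exponent in this relation. Assuming for contradiction that $\pi_1(M_r)$ is left-orderable, I would fix a left-ordering with positive cone $P$ and, after replacing $<$ by its reverse if necessary, normalize so that the meridian is positive. The relations $R$ and $\mu^{p}\lambda^{q}=1$ then translate into a system of order inequalities among the generators and their conjugates, exploiting that $P$ is closed under multiplication and that exactly one of $g, g^{-1}$ lies in $P$ for each $g \ne 1$.

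The heart of the argument is to propagate these inequalities through the braid relation to force a contradiction. This is where the negative twist matters: the factor $\sigma_1^{-2}$ reverses the direction of the inequalities contributed by the twisted strands relative to the positively twisted case treated in \cite{IT} and \cite{CGHV}, which is precisely why the admissible slopes descend \emph{below} $3(3v+2)$ to $3(3v+2)-2$. I anticipate organizing the propagation as an induction on $v$, with base case $v=0$ handled by direct computation and an inductive step that feeds one period of $(\sigma_1 \sigma_2)^{3}$ through the ordering.

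The main obstacle is the bookkeeping in this propagation: one must show that the chain of inequalities forced by $R$, once combined with the surgery relation at the threshold $r = 9v+4$, closes up to yield $1 < 1$, while remaining consistent for larger $r$. Verifying that the normalization of the meridian loses no generality, and that the conjugation-induced sign changes introduced by $\sigma_1^{-2}$ are controlled uniformly in $v$, are the delicate points I would expect to occupy most of the work.
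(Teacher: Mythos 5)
Your proposal is a strategy outline rather than a proof, and the step it leaves unexecuted is exactly the step where the theorem lives. The paper does not re-run the Nakae/Clay--Watson inequality propagation at all: it quotes its earlier technical result \cite[Theorem 1.1]{IT}, which says that if the knot group has a presentation $\langle a, b \mid (w_1 a^m w_1^{-1}) b^{-r} (w_2^{-1} a^n w_2) b^{r-k} \rangle$ with $m,n \ge 0$, $k \ge 0$, $a$ a meridian, and the preferred longitude written as $a^{-s} w a^{-t}$ with $w$ a \emph{positive} word (no $a^{-1}$ or $b^{-1}$), then every slope $p/q \ge s+t$ ($q \ne 0$) yields a non-left-orderable fundamental group. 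The entire content of the paper is then the verification (Proposition 2.2, via a Wirtinger presentation of a three-component link, with the twisting realized as $-1/(v+1)$ and $-1/u$ surgeries on the two unknotted components) that the $u$-twisted $(3,3v+2)$ knot group has this form with $r = u+1$, $k=1$, $w_1 = (ba)^{v+1}$, $w_2 = (ba)^v$, longitude exponents $s = 2u + 3(3v+2)+1$, $t = -1$, and $w = ((ba)^v b^{u+1})^2 (ba)^v b$. The bound $s + t = 3(3v+2) + 2u$ is nothing mysterious: it is the Seifert-framing correction $2u$ coming from the writhe of the added twists, specializing to $3(3v+2)-2$ at $u = -1$.

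The gap in your plan is that you never identify the structural fact that makes $u = -1$ work: the longitude word $w = ((ba)^v b^{u+1})^2 (ba)^v b$ is a positive word precisely because $b^{u+1} = b^0$ is empty when $u = -1$ (the paper explicitly remarks this fails for $u \le -2$). Your claim that the factor $\sigma_1^{-2}$ ``reverses the direction of the inequalities\ldots which is precisely why the admissible slopes descend to $3(3v+2)-2$'' is the assertion to be proved, and you give no mechanism for it; indeed your sketch, applied verbatim with $u = -2$, would ``predict'' a bound $3(3v+2)-4$, yet the argument genuinely breaks there because the longitude word then contains $b^{-1}$ and the positivity hypothesis underlying the whole inequality-propagation machine fails. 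In addition, your proposed induction on $v$ has no analogue in the actual argument (the verification is a direct computation uniform in $v$), and neither the two-generator relator nor the meridian--longitude words are ever computed in your proposal, so the inequality system you intend to derive a contradiction from is never written down. What you have is a plausible research plan whose delicate points you correctly flag, but flagging them is not resolving them.
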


Actually, the $(-1)$-twisted $(3, 3v + 2)$-torus knot is equivalent to the $1$-twisted $(3, 3v + 1)$-torus knot with $v \ge 0$. 
Thus, the result in \cite{CGHV} guarantees that $r$-surgery on the knot yields a 3-manifold with non left-orderable fundamental group if $r \ge 3 ( 3v+2) +1$. 
That is, our theorem above can improve their bound slightly. 

\section{Proof}

We start with recalling our technical main result in \cite{IT}. 

\begin{theorem*}[{\cite[Theorem 1.1]{IT}}]\label{thm}
Let $K$ be a knot in a closed, connected 3-manifold $M$. 
Suppose that the knot group $\pi_1(M-K)$ admits the presentation  
\[
\langle a, b \ \vert \ (w_1 a^m w_1^{-1}) b^{-r} (w_2^{-1} a^n w_2) b^{r-k} \rangle
\]
Here $w_1, w_2$ are arbitrary words 
with $m, n\ge 0$, $r\in \mathbb{Z}$, $k\ge 0$. 
Suppose further that $a$ represents a meridian of $K$ and 
$a^{-s} w a^{-t}$ represents a longitude of $K$ with $s, t\in \mathbb{Z}$ and $w$ is a word which excludes $a^{-1}$ and $b^{-1}$.
Then if $q\ne 0$ and $p/q\ge s+t$, then Dehn surgery on $K$ along the slope corresponding to $p/q$ with respect to the meridian-longitude system yields a closed 3-manifold with non left-orderable fundamental group.
\end{theorem*}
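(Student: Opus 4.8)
The plan is to work with a two-relator presentation of the surgered group and to assume, toward a contradiction, that it is left-orderable. Write $G = \langle a, b \mid R, \ a^{p}(a^{-s} w a^{-t})^{q}\rangle$, where $R$ is the given relator and the second relator kills the surgery slope $\mu^{p}\lambda^{q} = a^{p}(a^{-s}wa^{-t})^{q}$. I would first record the three facts about a left order $<$ that the argument uses repeatedly: trichotomy ($g>1$, $g=1$, or $g<1$ for each $g$); the equivalence $g>1 \Leftrightarrow g^{-1}<1$; and closure of the positive cone $P=\{g:g>1\}$ under multiplication. Since $a$ is a meridian it is nontrivial, and because the opposite order $g <^{\mathrm{op}} h \Leftrightarrow h < g$ is again left-invariant and reverses all signs, I may assume $a>1$; since the slope $p/q$ represents the same surgery as $(-p)/(-q)$, I may also assume $q>0$, so that the hypothesis $p/q\ge s+t$ reads $p\ge q(s+t)$.

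Next I would simplify the surgery relator. Conjugating $a^{p}(a^{-s}wa^{-t})^{q}$ by $a^{s}$ turns it into $a^{p}(wa^{-u})^{q}$ with $u=s+t$, and expanding gives the relation
\[
\prod_{i=0}^{q-1} a^{-iu} w a^{iu} = a^{qu-p}.
\]
Because $a>1$ and $p\ge qu$, the right-hand side satisfies $a^{qu-p}\le 1$, so it suffices to prove that the left-hand product lies in $P$, i.e. is strictly $>1$; that contradiction then proves non-left-orderability. The positivity hypotheses enter here: since $w$ excludes $a^{-1}$ and $b^{-1}$ and $m,n\ge 0$, with $a>1$ fixed each generator letter appearing in $w$ and in $a^{m},a^{n}$ is $\ge 1$ precisely when $b\ge 1$.

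I would therefore split on the sign of $b$. When $b\ge 1$ every letter of $w$ is $\ge 1$, hence $w\ge 1$; generically $w>1$, so the $i=0$ factor above is $>1$ and the task is to show the remaining conjugated factors do not drag the product below $1$ (the degenerate possibility $w=1$ is disposed of directly from the exponent count). When $b<1$ the word $w$ is no longer visibly one-signed, and here I would use the relator, rewritten as $w_1 a^{m} w_1^{-1}=b^{k-r}(w_2^{-1}a^{-n}w_2)b^{r}$, which equates a conjugate of the positive power $a^{m}$ to a conjugate of the negative power $a^{-n}$; reading this against $a>1$ and $b<1$ pins down the forced sign relations and reduces this case to the first (or rules it out directly).

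The main obstacle is precisely the positivity of the product $\prod_{i} a^{-iu} w a^{iu}$. A left order is not conjugation-invariant, so $w>1$ does not imply $a^{-iu}wa^{iu}>1$, and naive term-by-term sign bookkeeping fails whenever $u=s+t>0$ because of the interior negative powers of $a$. This is exactly where the bound $p/q\ge s+t$ does its work: it is equivalent to the net $a$-exponent $p-qu$ being nonnegative, and I would upgrade this counting statement to a genuine inequality in $<$ by a sliding induction on the $q$ copies of $w$ — carrying the surplus positive $a$-power leftward through the word and using $a>1$ together with the relator at each step to keep the running partial product inside $P$. Getting this induction to close, and verifying the strictness needed when $p=qu$, is the delicate heart of the proof; the order-normalizations ($a>1$, $q>0$) and the disposal of the degenerate cases $b=1$ and $w=1$ are routine by comparison.
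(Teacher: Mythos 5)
Your attempt should be measured against the proof in \cite{IT} (this paper only quotes the statement from there; the argument in \cite{IT} refines Nakae's \cite{N}). Your setup is fine as far as it goes: the order-theoretic preliminaries, the normalizations ($a>1$ via the opposite order, $q>0$), and the expansion $\prod_{i=0}^{q-1} a^{-iu}wa^{iu}=a^{qu-p}$ with right side $\le 1$ when $p/q\ge s+t$ are all correct. But there is a genuine gap exactly where you locate ``the delicate heart,'' and the missing idea is not an induction at all: $\lambda=a^{-s}wa^{-t}$ is a longitude and $a$ a meridian of the \emph{same} knot, so they lie in a common peripheral torus and commute in $\pi_1(M-K)$, hence in the surgered group. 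Consequently $w=a^{s}\lambda a^{t}$ commutes with $a$, every conjugate $a^{-iu}wa^{iu}$ equals $w$, and your product telescopes to $w^{q}=a^{qu-p}\le 1$, giving $w\le 1$ immediately. Your proposed ``sliding induction'' has no mechanism to substitute for this: as you yourself observe, a left order gives no control over $a^{-iu}wa^{iu}$, and nothing in the relator $R$ lets you carry $a$-powers through $w$; the obstacle you flag is real and is removed only by peripheral commutativity, which your argument never invokes.

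The second gap is the case $a>1$, $b<1$, which is the actual content of \cite{IT}: one must show by an inductive sign-chase using the generalized Baumslag--Solitar shape of $R$ (this is where $m,n\ge0$ and $k\ge0$ do their work) that $a>1$ forces $b>1$, so that the positive word $w$ satisfies $w\ge 1$, contradicting $w\le 1$; the boundary and degenerate possibilities ($p=q(s+t)$, $w$ a pure power of $b$, $w=1$) are then handled via torsion-freeness of left-orderable groups, much as you indicate. Your one-sentence plan --- ``reading this against $a>1$ and $b<1$ pins down the forced sign relations'' --- is an assertion, not an argument: since $w_1,w_2$ are \emph{arbitrary} and conjugation preserves sign only in bi-invariant orders, no direct reading of $w_1a^{m}w_1^{-1}=b^{k-r}(w_2^{-1}a^{-n}w_2)b^{r}$ yields the sign of $b$, and this lemma occupies most of the proof in \cite{IT}. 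A smaller slip: ``since $a$ is a meridian it is nontrivial'' is unjustified in the quotient group, where the image of the meridian can die; the case $a=1$ must be (and easily is) treated separately, since then $R$ reduces to $b^{-k}$, making the group a quotient of a finite cyclic group, which is not left-orderable unless trivial.
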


Thus, to prove Theorem~\ref{thm1}, we show the following. 

\begin{proposition}\label{prop}
Let $K$ be the $u$-twisted torus knot of type $(3,3v+2)$ in $S^3$ with $u\in \mathbb{Z}, v \ge 0$. 
Then the knot group $\pi_1(S^3-K)$ admits the presentation 
\[
\langle a, b \ \vert \ (w_1 a^m w_1^{-1}) b^{-r} (w_2^{-1} a^n w_2) b^{r-k} \rangle
\]
with $m=n=1$, $r=u+1$, $k=1$, $w_1 = (ba)^{v+1}$, $w_2=(ba)^v$. 
Furthermore the generator $a$ represents a meridian of $K$ and 
the preferred longitude of $K$ is represented as $a^{-s} w a^{-t}$ with 
$s=2u+3(3v+2)+1$, $t=-1$ and $w = ((ba)^v b^{u+1} )^2 (ba)^v b$. 
\end{proposition}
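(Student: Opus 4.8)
The plan is to compute $\pi_1(S^3 - K)$ directly from a braid description of $K$ and then reorganize the resulting relator into the prescribed shape. First I would present $K$ as the closure of the $3$-braid $\beta = (\sigma_1\sigma_2)^{3v+2}\sigma_1^{2u}$, where $(\sigma_1\sigma_2)^{3v+2}$ yields the $(3,3v+2)$-torus knot and $\sigma_1^{2u}$ records the $u$ full twists on two adjacent strands. Labelling the three strands at the top by meridians $x_1,x_2,x_3$, the group of the closure has the standard presentation $\langle x_1,x_2,x_3 \mid x_i = \beta_\ast(x_i)\rangle$ (one relation being redundant), where $\beta_\ast$ is the Artin automorphism of the free group. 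I would set $a = x_1$, which is a meridian of $K$ by construction since the closure is a knot.

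Next I would compute $\beta_\ast$ explicitly. The key simplification is that $(\sigma_1\sigma_2)^3 = \Delta^2$ is the full twist and acts centrally, so the action of $(\sigma_1\sigma_2)^{3v+2}$ can be organized period-by-period; this is exactly what makes the exponent $v$ appear as a power and produces repeated blocks. After introducing the second generator $b$ as a suitable product of the $x_i$, chosen so that $ba$ represents one period around the torus, I would eliminate $x_3$ (and the redundant relation) by Tietze transformations, arriving at a two-generator one-relator presentation. The remaining task is purely algebraic: rewrite the single relator, using the commutation afforded by the central full twist, into the form $(w_1 a w_1^{-1}) b^{-(u+1)} (w_2^{-1} a w_2) b^{u}$ with $w_1 = (ba)^{v+1}$ and $w_2 = (ba)^v$, thereby reading off $m=n=1$, $r=u+1$, $k=1$.

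For the peripheral data, $a=x_1$ is a meridian automatically. To obtain the longitude I would take a parallel copy of $K$ pushed off along the surface framing, read the word it traces in the meridian generators as it passes through the $(3v+2)$-fold torus region and the twist region, and then correct by the appropriate power of $a$ to normalize the framing. Rewriting this in terms of $a$ and $b$ should yield $a^{-s} w a^{-t}$ with $w = ((ba)^v b^{u+1})^2 (ba)^v b$ a positive word and $t=-1$, and the framing correction should fix $s = 2u + 3(3v+2)+1$. I would sanity-check this against the abelianization (where the relator forces $b = a^2$) and against the small cases $v=0$ and $u=-1$.

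The step I expect to be the main obstacle is precisely this longitude bookkeeping: keeping track of the parallel strand through both twist regions and pinning down the framing correction so that $s$ and $t$ come out exactly, while guaranteeing that $w$ contains neither $a^{-1}$ nor $b^{-1}$. The reorganization of the relator into the required $w_1,w_2$ blocks is also delicate, but there the central full twist supplies a clear organizing principle; by contrast the longitude calculation offers less structural guidance and is where sign and exponent errors are easiest to make. Once the value of $s+t$ is confirmed, Theorem~\ref{thm1} follows by feeding this presentation into the cited result of \cite{IT}.
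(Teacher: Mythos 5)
Your strategy is genuinely different from the paper's. The paper never touches braids or the Artin action: it writes down a Wirtinger presentation for the three-component link $L$ of Lemma~\ref{lem}, realizes the $v$- and $u$-fold twisting as Dehn surgery on the two unknotted components (adding the relations $\xi=(\alpha\beta\gamma)^{v+1}$ and $\psi=(\alpha\beta)^{u}$), and then reaches the required shape by the explicit substitutions $h=\alpha\beta\gamma$, $g=\xi\gamma^{-1}$, $a=g^{-1}h^{v+1}$, $b=ha^{-1}$, with the longitude likewise imported from the link group and corrected by a power of the meridian. Your braid-closure plan could in principle reach the same presentation, but as written it is an outline, not a proof: the explicit Artin automorphism, the choice of $b$, the Tietze reduction to a single relator of the very particular shape $(w_1aw_1^{-1})b^{-r}(w_2^{-1}aw_2)b^{r-k}$, and the entire longitude computation are all deferred, and you yourself flag the longitude bookkeeping --- which is half of the Proposition --- as the step most likely to fail. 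One ingredient is also stated incorrectly: the full twist $(\sigma_1\sigma_2)^3$ is central in $B_3$, but its Artin action on the free group is not trivial, it is conjugation by $x_1x_2x_3$; your periodicity argument needs ``inner automorphism'', not ``acts centrally''.

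The deferral matters because the exponent you plan to confirm cannot in fact be confirmed. Run the abelianization check you propose: the relator forces $[b]=2[a]$ in $H_1(S^3-K)\cong\mathbb{Z}$, so $w=((ba)^vb^{u+1})^2(ba)^vb$ has class $(9v+4u+6)[a]$, and since the preferred longitude is null-homologous, the form $a^{-s}wa^{-t}$ with $t=-1$ forces $s=3(3v+2)+4u+1$. The value $s=3(3v+2)+2u+1$ that you say the framing correction ``should fix'' --- the one in the statement --- fails this check for every $u\neq 0$; a correct execution of your braid computation (whose writhe is $2(3v+2)+2u$, and whose parallel-copy word is not $w$, so the correction exponent changes under the rewriting) would necessarily land on the $4u$ value, not the $2u$ one. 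This discrepancy is not of your making: it enters the paper's own proof at the framing correction imported from \cite{CW}, and it does not affect Theorem~\ref{thm1}, since for $u=-1$ the corrected threshold $3(3v+2)-4$ lies below the claimed $3(3v+2)-2$. But it does mean that a faithful execution of your plan would end by contradicting, rather than establishing, the stated value of $s$; a complete argument must actually carry out the longitude computation and correct that exponent, which your proposal leaves entirely open.
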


In the following, we will obtain a \textit{Wirtinger presentation} of such a knot group. 
Note that, in \cite{CW, CGHV, IT}, all the presentations for knot groups are obtained by using Heegaard splitting of $S^3$. 

Let $L$ be the three component link, and $D$ the diagram of $L$ illustrated in Figure~\ref{3complink1}. 

\begin{figure}[htb]
\begin{center}
\includegraphics[bb = 0 0 346 404, scale=.3]{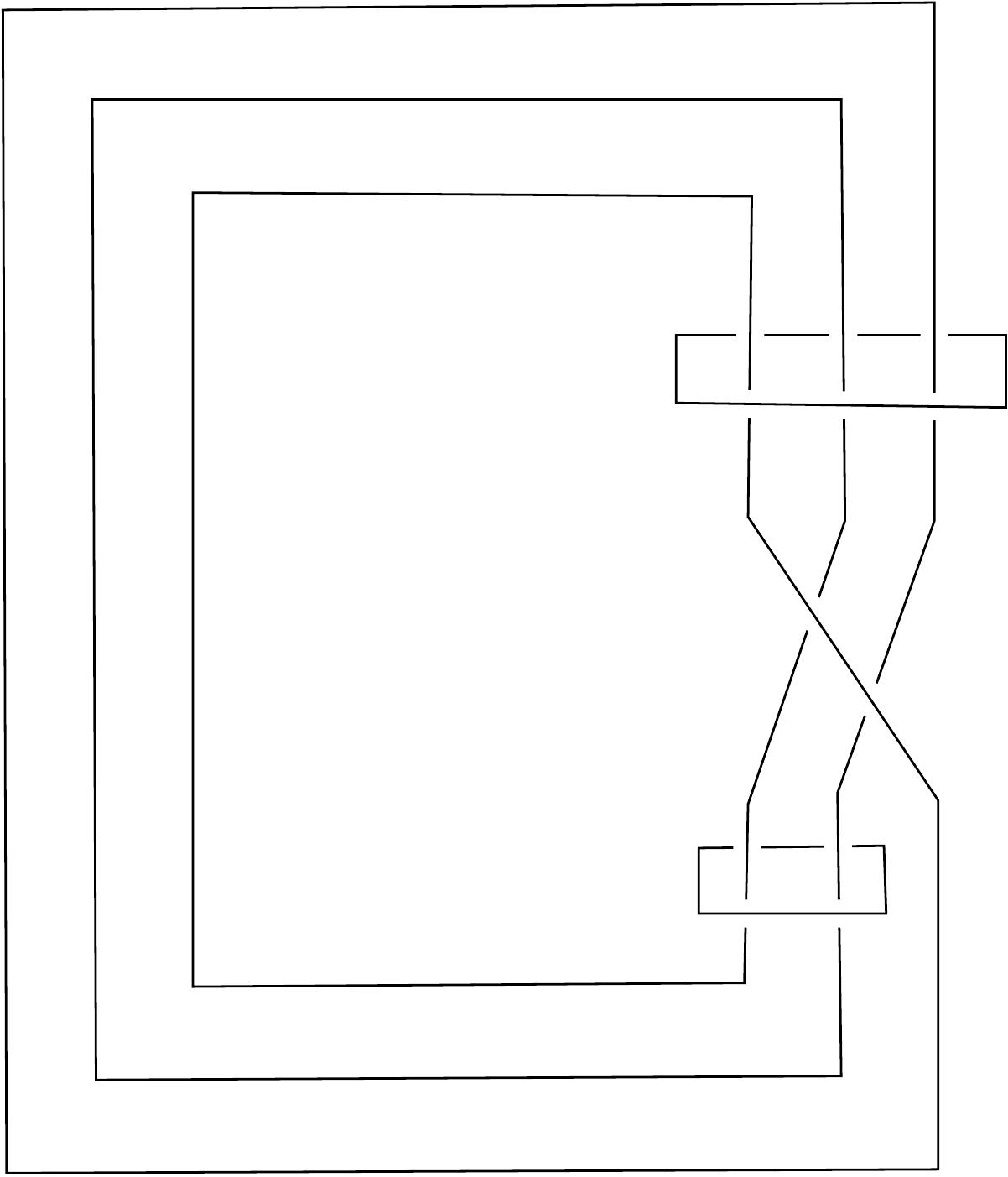}
\caption{three component link 1}\label{3complink1}
\end{center}
\end{figure}

As in Figure~\ref{3complink3}, we label $\alpha, \beta, \gamma, \xi, \psi, 
\delta_1, \delta_2, \delta_3, \delta_4, \delta_5, \delta_6, \delta_7$ to the arcs in the diagram, and $P_1, \cdots, P_{12}$ to the twelve crossings in the diagram,. 

\begin{figure}[htb]
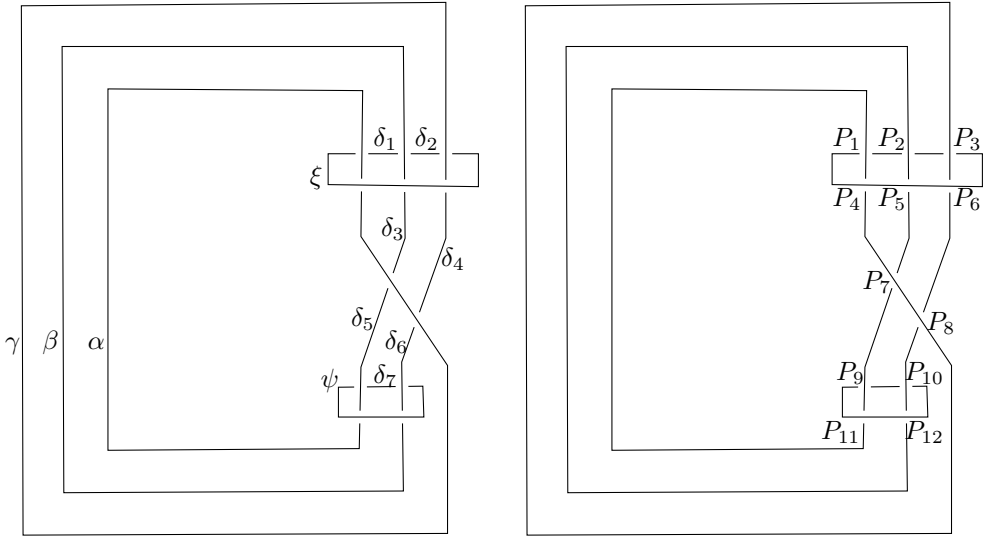
\centering
  {\unitlength=1cm
  \begin{picture}(10,7)
   \put(-1.5,0){\includegraphics[bb = 0 0 346 404, scale=.5]{3complink1.pdf}}
   \put(5.2,0){\includegraphics[bb = 0 0 346 404, scale=.5]{3complink1.pdf}}
  \put(-1.7,2.5){$\gamma$}
  \put(-1.2,2.5){$\beta$}
  \put(-0.6,2.5){$\alpha$}
  \put(2.35,4.7){$\xi$}
  \put(2.5,2){$\psi$}
  \put(3.2,5.2){$\delta_1$}
  \put(3.75,5.2){$\delta_2$}
  \put(3.3,4){$\delta_3$}
  \put(4.1,3.6){$\delta_4$}
  \put(2.9,2.8){$\delta_5$}
  \put(3.35,2.5){$\delta_6$}
  \put(3.2,2.05){$\delta_7$}
  \put(9.3,5.2){$P_1$}  
  \put(9.9,5.2){$P_2$}  
  \put(10.9,5.2){$P_3$}  
  \put(9.3,4.4){$P_4$}  
  \put(9.9,4.4){$P_5$}  
  \put(10.9,4,4){$P_6$}  
  \put(9.7,3.3){$P_7$}
  \put(10.55,2.75){$P_8$}
  \put(9.35,2.08){$P_9$}  
  \put(10.26,2.08){$P_{10}$}  
  \put(9.15,1.3){$P_{11}$}  
  \put(10.26,1.3){$P_{12}$}  
\end{picture}}
\caption{three component link}\label{3complink3}
\end{figure}

Next we construct relators for the crossing of $D$ as follows. 
\begin{align*}
& P_1: \ & \xi\alpha\delta_1^{-1}\alpha^{-1} \\
& P_2: \ & \delta_1\beta\delta_2^{-1}\beta^{-1} \\
& P_3: \ & \delta_2\gamma\xi^{-1}\gamma^{-1} \\
& P_4: \ & \xi^{-1}\alpha\xi\gamma^{-1} \\
& P_5: \ & \xi^{-1}\beta\xi\delta_3^{-1} \\
& P_6: \ &        \xi^{-1}\gamma\xi\delta_4^{-1} \\
& P_7: \ & \delta_5^{-1}\gamma\delta_3\gamma^{-1} \\
& P_8: \ & \delta_6^{-1}\gamma\delta_4\gamma^{-1} \\
& P_9: \ & \psi\delta_5\delta_7^{-1}\delta_5^{-1} \\
& P_{10}: \ & \delta_7\delta_6\psi^{-1}\delta_6^{-1} \\
& P_{11}: \ & \psi^{-1}\delta_5\psi\alpha^{-1} \\
& P_{12}: \ & \psi^{-1}\delta_6\psi\beta^{-1} 
\end{align*}
Moreover, the following are obtained from the above. 
\begin{align*}
\delta_1 &= \alpha^{-1}\xi\alpha  & ( P_1 )\\
%\delta_2 &= \gamma\xi\alpha\gamma^{-1} \\
\delta_2 &= \gamma\xi\gamma^{-1}  & ( P_3 )\\
\delta_3 &= \xi^{-1}\beta\xi  & ( P_5 ) \\
\delta_4 &= \xi^{-1}\gamma\xi  & ( P_6 )\\
\delta_5 &= \gamma\delta_3\gamma^{-1} = \gamma\xi^{-1}\beta\xi\gamma^{-1}  & ( P_7 ) \\
\delta_6 &= \gamma\delta_4\gamma^{-1} = \gamma\xi^{-1}\gamma\xi\gamma^{-1}  & ( P_8 )
\end{align*}
Moreover, from the relations at $P_{11}$ and $P_{12}$, 
we have $\delta_5 = \psi \alpha \psi^{-1}$ and $\delta_6 = \psi \beta \psi^{-1}$. 
From these, together with the relations at $P_{9}$ and $P_{10}$, we have 
$\psi^{-1}\beta^{-1}\alpha^{-1}\psi\alpha\beta$. 
From these, by erasing $\delta_1, \cdots, \delta_7$, we have the following lemma consequently. 
Here let $l_1$ be an upper trivial component, $l_2$ be a lower trivial component, and $l_0$ be the other component in Figure \ref{3complink1}. 

\begin{lemma}\label{lem}
For the three component link $L$ in Figure \ref{3complink1}, the link group $\pi_1(S^3-L)$ admits the presentation 
\[
\left\langle \alpha, \beta, \gamma, \xi, \psi \ \left\vert \ 
\begin{array}{l}
\xi^{-1}\gamma^{-1}\beta^{-1}\alpha^{-1}\xi\alpha\beta\gamma, 
%\xi\gamma^{-1}\beta^{-1}\alpha^{-1}\xi^{-1}\alpha\beta\gamma, 
\xi^{-1}\alpha\xi\gamma^{-1}, 
 \psi^{-1}\gamma\xi^{-1}\beta\xi\gamma^{-1}\psi\alpha^{-1}, \\
\psi^{-1}\gamma\xi^{-1}\gamma\xi\gamma^{-1}\psi\beta^{-1}, 
\psi^{-1}\beta^{-1}\alpha^{-1}\psi\alpha\beta
\end{array} \right. 
\right\rangle
\]
Furthermore, for the component $l_1$ of $L$, a meridian is  represented by $\xi$ and the preferred longitude is represented by $\alpha\beta\gamma$. For the component $l_2$ of $L$, a meridian is represented $\psi$ and the preferred longitude is represented by $\alpha\beta$. 
Also, for the component $l_0$, a meridian is represented by $\alpha$ and a longitude is represented by $\xi\xi\gamma^{-1}\psi\xi\gamma^{-1}\psi$. 
\end{lemma}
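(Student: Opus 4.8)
The plan is to read a Wirtinger presentation off the diagram $D$ of Figure~\ref{3complink3}, eliminate the auxiliary generators $\delta_1,\dots,\delta_7$ by Tietze transformations, and then extract the meridian--longitude data by orienting and tracing each component through $D$.

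The presentation part is essentially assembled from the computation preceding the statement. The twelve relators $P_1,\dots,P_{12}$ are exactly the Wirtinger relators of $D$: one generator for each of the twelve arcs $\alpha,\beta,\gamma,\xi,\psi,\delta_1,\dots,\delta_7$ and one relation for each of the twelve crossings, with the conjugating (over) arc and the signs determined by the over/under and orientation data drawn in the figure. I would then eliminate $\delta_1,\dots,\delta_7$ using the seven relators $P_1,P_3,P_5,P_6,P_7,P_8,P_9$, which solve for these generators as the displayed conjugates of $\xi$ and, finally, $\delta_7=\delta_5^{-1}\psi\delta_5$. Substituting into the remaining five relators and reducing cyclically, $P_2$ becomes $\xi^{-1}\gamma^{-1}\beta^{-1}\alpha^{-1}\xi\alpha\beta\gamma$, $P_4$ is already $\xi^{-1}\alpha\xi\gamma^{-1}$, and $P_{11},P_{12}$ become the two relators involving $\psi$; moreover $P_{11},P_{12}$ give $\delta_5=\psi\alpha\psi^{-1}$ and $\delta_6=\psi\beta\psi^{-1}$, and feeding these (together with $\delta_7=\delta_5^{-1}\psi\delta_5$) into $P_{10}$ collapses it to the commutator relation $\psi^{-1}\beta^{-1}\alpha^{-1}\psi\alpha\beta$, i.e.\ that $\psi$ commutes with $\alpha\beta$. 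This yields the stated five-relator presentation; one relator is of course redundant, as always for a connected diagram, but it is convenient to keep all five.

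The substantive part, and the step I expect to be the main obstacle, is the meridian--longitude bookkeeping. The meridians are immediate: $\xi$, $\psi$ and $\alpha$ are arcs of $l_1$, $l_2$ and $l_0$. For the longitudes I would fix an orientation of each component and traverse it in $D$, recording at each undercrossing the over-arc generator with the sign of the crossing; this produces a longitude based at the chosen arc. Tracing $l_1$ (the arcs $\xi,\delta_1,\delta_2$) through $P_1,P_2,P_3$ reads off $\alpha\beta\gamma$, and tracing $l_2$ (the arcs $\psi,\delta_7$) through $P_9,P_{10}$ reads off $\delta_5\delta_6=\psi\alpha\beta\psi^{-1}$, which equals $\alpha\beta$ once the relation that $\psi$ commutes with $\alpha\beta$ is used; since $l_1$ and $l_2$ are unknotted and have no self-crossings these are already the preferred longitudes, with no framing correction. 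Tracing $l_0$ (the arcs $\alpha,\beta,\gamma,\delta_3,\delta_4,\delta_5,\delta_6$) through its seven undercrossings $P_4,P_6,P_8,P_{12},P_5,P_7,P_{11}$ reads off $\xi\xi\gamma^{-1}\psi\xi\gamma^{-1}\psi$, which is only \emph{a} longitude: its total exponent in the $l_0$-meridians (here the two $\gamma$'s) is $-2\neq 0$, so it would still require a correcting power of $\alpha$ to become preferred, which is why the statement claims no more for $l_0$. The care here is entirely in fixing the orientations, the cyclic order of crossings along each component, and the crossing signs, so that the words come out exactly as stated rather than as an inverse or a meridian-conjugate; this is the only place where one must genuinely look at the picture rather than manipulate symbols.
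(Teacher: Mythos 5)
Your proposal is correct and follows essentially the same route as the paper: a Wirtinger presentation from the diagram, elimination of $\delta_1,\dots,\delta_7$ via Tietze transformations (with $P_{11},P_{12}$ used to collapse $P_9,P_{10}$ to the commutator relator), and meridian--longitude words read off by traversing each component. Your extra care on the longitude bookkeeping --- in particular noting that $\xi\xi\gamma^{-1}\psi\xi\gamma^{-1}\psi$ is only \emph{a} longitude of $l_0$ because of the $-2$ exponent from the self-crossings, while $l_1,l_2$ need no framing correction --- is exactly the point the paper leaves implicit and handles later in the proof of Proposition~\ref{prop}.
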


In fact, $u$-twisted torus knots of type $(3, 3v+2)$ are obtained from the link $L$ by adding $u$ full twists along the component $l_2$, and $v$ full twists along the component $l_1$.
We note that adding $v$ full twists along $l_1$ is realized by the Dehn surgery on $l_1$ along the slope $-\frac{1}{v+1}$. 
Also note that adding $u$ full twists along $l_2$ is realized by the Dehn surgery on $l_2$ along the slope $-\frac{1}{u}$. 
Hence, to obtain a presentation of the knot group for a $u$-twisted torus knots of type $(3, 3v+2)$, 
it is sufficient to add relations $\xi(\alpha\beta\gamma)^{-v-1}=1$ and $\psi(\alpha\beta)^u=1$ to the presentation given in Lemma \ref{lem}. 

\begin{proof}[Proof of Proposition \ref{prop}]
Let $K$ be a $u$-twisted torus knot of type $(3, 3v+2)$. 
By Lemma \ref{lem} and the argument above, the knot group of $K$ admits the presentation 
\[
\left\langle \alpha, \beta, \gamma, \xi, \psi \ \left\vert \ 
\begin{array}{l}
\xi^{-1}\gamma^{-1}\beta^{-1}\alpha^{-1}\xi\alpha\beta\gamma, 
\xi^{-1}\alpha\xi\gamma^{-1}, 
 \psi^{-1}\gamma\xi^{-1}\beta\xi\gamma^{-1}\psi\alpha^{-1}, \\
\psi^{-1}\gamma\xi^{-1}\gamma\xi\gamma^{-1}\psi\beta^{-1}, 
\psi^{-1}\beta^{-1}\alpha^{-1}\psi\alpha\beta, 
\xi(\alpha\beta\gamma)^{-v-1}, \psi(\alpha\beta)^u
\end{array} \right. 
\right\rangle
\]

We only have to prove that this presentation can be transformed to the presentation in \cite[Theorem 1.1]{IT}. 
Now we set $g:=\xi\gamma^{-1}$, $h:=\alpha\beta\gamma$. 
Then the generators $\alpha, \beta, \gamma, \xi, \psi$ of the presentation above are described by  using $g$ and $h$ as follows.
\begin{align*}
  \xi &= (\alpha\beta\gamma)^{v+1} =h^{v+1} \\
  \gamma &= g^{-1}\xi=g^{-1}h^{v+1} \\
  \psi &= (\alpha\beta)^u=(h^{-v}g)^u \\
  \alpha &= \xi\gamma\xi^{-1}=h^{v+1}g^{-1} \\
  \beta &= gh^{-v-1}h^{-v}g =gh^{-2v-1}g
\end{align*}
Thus the relators of the presentation are represented by using $g$ and $h$ as follows. 
\begin{align}
&  \xi^{-1}\gamma^{-1}\beta^{-1}\alpha^{-1}\xi\alpha\beta\gamma \notag \\
& \qquad = h^{-v-1}h^{-1}h^{v+1}h = 1 \notag \\
& \xi^{-1}\alpha\xi\gamma^{-1} \notag \\
& \qquad = h^{-v-1}h^{v+1}g^{-1}g = 1 \notag \\
& \gamma\xi^{-1}\beta\xi\gamma^{-1}\psi\alpha^{-1}\psi^{-1} \notag \\
& \qquad = g^{-1}gh^{-2v-1}gg(h^{-v}g)^ugh^{-v-1}(h^{-v}g)^{-u} \label{eq1} \\
% &= h^{-2v-1}g^2(h^{-v}g)^ugh^{-v-1}(h^{-v}g)^{-u} \notag \\
%&= h^{-v-1}g^2(h^{-v}g)^{u-1}h^{-v}g^2h^{-v-1}(g^{-1}h^v)^{u-1}g^{-1} \label{eq1} \\
&  \gamma\xi^{-1}\gamma\xi\gamma^{-1}\psi\beta^{-1}\psi^{-1} \notag \\
& \qquad = g^{-1}g^{-1}h^{v+1}g(h^{-v}g)^ug^{-1}h^{2v-1}g^{-1}(h^{-v}g)^{-u} \label{eq2} \\
% &= g(h^{-v}g)^{u-1}h^{v+1}g^{-2}h^v(g^{-1}h^v)^{u-1}g^{-2}h^{v+1} \label{eq2} \\
&  \psi^{-1}\beta^{-1}\alpha^{-1}\psi\alpha\beta  \notag \\
& \qquad = (g^{-1}h^v)^ug^{-1}h^v(h^{-v}g)^uh^{-v}g = 1 \notag
\end{align}
We note that Equation~\eqref{eq1} is equivalent to 
\[
h^{-v-1}g^2(h^{-v}g)^{u-1}h^{-v}g^2h^{-v-1}(g^{-1}h^v)^{u-1}g^{-1} 
\]
and Equation~\eqref{eq2} is equivalent to 
\[
g(h^{-v}g)^{u-1}h^{v+1}g^{-2}h^v(g^{-1}h^v)^{u-1}g^{-2}h^{v+1}\;.
 \]
It then follows that Equations~\eqref{eq1} and \eqref{eq2} are equivalent by considering the inverse. 
%Thus the relators of the presentation are represented by using $g$ and $h$ as follows. 
%\begin{align}
%  \xi^{-1}\gamma^{-1}\beta^{-1}\alpha^{-1}\xi\alpha\beta\gamma & = h^{-v-1}h^{-1}h^{v+1}h = 1 \notag \\
%  \xi^{-1}\alpha\xi\gamma^{-1} &= h^{-v-1}h^{v+1}g^{-1}g = 1 \notag \\
% \gamma\xi^{-1}\beta\xi\gamma^{-1}\psi\alpha^{-1}\psi^{-1} 
% &= g^{-1}gh^{-2v-1}gg(h^{-v}g)^ugh^{-v-1}(h^{-v}g)^{-u} \notag \\
% &= h^{-2v-1}g^2(h^{-v}g)^ugh^{-v-1}(h^{-v}g)^{-u} \notag \\
%&= h^{-v-1}g^2(h^{-v}g)^{u-1}h^{-v}g^2h^{-v-1}(g^{-1}h^v)^{u-1}g^{-1} \label{eq1} \\
%  \gamma\xi^{-1}\gamma\xi\gamma^{-1}\psi\beta^{-1}\psi^{-1} 
% &= g^{-1}g^{-1}h^{v+1}g(h^{-v}g)^ug^{-1}h^{2v-1}g^{-1}(h^{-v}g)^{-u} \notag \\
% &= g(h^{-v}g)^{u-1}h^{v+1}g^{-2}h^v(g^{-1}h^v)^{u-1}g^{-2}h^{v+1} \label{eq2} \\
% \psi^{-1}\beta^{-1}\alpha^{-1}\psi\alpha\beta & = (g^{-1}h^v)^ug^{-1}h^v(h^{-v}g)^uh^{-v}g = 1 \notag
%\end{align}
%We see that Equation~\eqref{eq1} is equivalent to Equation~\eqref{eq2} by taking the inverse. 
Also note that the last equation corresponds to the commutativity of the meridian and longitude for the component $l_2$, and so, can be obtained from the other relations. 

Therefore, we have the following presentation of the knot group of $K$, 
\[
\langle g, h \ \vert \ 
g^2(h^{-v}g)^ugh^{-v-1}(h^{-v}g)^{-u}h^{-2v-1} \rangle 
\]
and is equivalent to the following. 
\[
\langle g, h \ \vert \ 
h^{v+1} g^{-1} ( h^{-v} g )^{-u} g^{-2} h^{2v+1} ( h^{-v} g )^{u}
\rangle 
\]
Furthermore, by setting $a=g^{-1} h^{v+1}$ and $b= ha^{-1}$, 
i.e., $ h = b a$ and $ g = h^{v+1} a^{-1} = ( b a )^{v+1} a^{-1} = (ba)^v b $, 
we have the following. 
\[
\langle a, b \ \vert \ 
( b a )^{v+1} a ( b a )^{-v-1} b^{-u-1} ( b a )^{-v} a ( b a )^v b^u 
\rangle
\]
Consequently the knot group $\pi_1(S^3-K)$ admits the presentation 
\[
\langle a, b \ \vert \ (w_1 a^m w_1^{-1}) b^{-r} (w_2^{-1} a^n w_2) b^{r-k} \rangle
\]
with $m=n=1$, $r=u+1$, $k=1$, $w_1 = (ba)^{v+1}$, $w_2=(ba)^v$. 

By Lemma \ref{lem}, a meridian of $K$ is represented by $\alpha$. 
This $\alpha$ is represented by 
$h^{v+1}g^{-1}$ which equals to $a$.
Also by Lemma \ref{lem}, a longitude of $K$ is represented by $\xi\xi\gamma^{-1}\psi\xi\gamma^{-1}\psi$.
This element is represented by 
$h^{v+1}(g(h^{-v}g)^u)^2$ which equals to $(ba)^{v+1}(ba)^vb^{u+1}(ba)^vb^{u+1}$. 
Adding $a^{-1}$, we have 
$a^{-1}(ba)^vb^{u+1}(ba)^vb^{u+1}(ba)^{v+1}$. 
Finally, as noted in the paragraph just below \cite[Proposition 3.2]{CW}, 
the presentation of the preferred longitude of $K$ is obtained 
from the above by adding $a^{-3(3v+2)-2u}$ as follows. 
\[
a^{-3(3v+2)-2u-1} ( ( ( b a )^v b^{u+1} )^2 ( b a )^v b ) a 
\]
Therefore the preferred longitude of $K$ is represented as $a^{-s} w a^{-t}$ with 
$s=2u+3(3v+2)+1$, $t=-1$ and $w = ((ba)^v b^{u+1} )^2 (ba)^v b$. 

This completes the proof of Proposition \ref{prop} since $u$ is any integer.
\end{proof}

We remark that the condition for the presentation of the preferred longitude such as $w$ is a positive word in \cite[Theorem 1.1]{IT} holds in the case of $u=-1$, although it does not hold in the case of $u \le -2$.


\begin{thebibliography}{0}

\bibitem{BGW}
S. Boyer, C. McA. Gordon\ and\ L. Watson, On L-spaces and left-orderable fundamental groups, Math. Ann. {\bf 356} (2013), no.~4, 1213--1245. 

\bibitem{CW}
A. Clay\ and\ L. Watson, Left-orderable fundamental groups and Dehn surgery, Int. Math. Res. Not. IMRN {\bf 2013}, no.~12, 2862--2890. 

\bibitem{CGHV}
K. Christianson, J. Goluboff, L. Hamann\ and\ S. Varadaraj, Non-left-orderable surgeries on twisted torus knots, Proc. Amer. Math. Soc. {\bf 144} (2016), no.~6, 2683--2696. 

\bibitem{D}
J. C. Dean, Small Seifert-fibered Dehn surgery on hyperbolic knots, Algebr. Geom. Topol. {\bf 3} (2003), 435--472. 

\bibitem{IT}
K. Ichihara\ and\ Y. Temma, Non-left-orderable surgeries and generalized Baumslag-Solitar relators, J. Knot Theory Ramifications {\bf 24} (2015), no.~1, 1550003, 8 pp. 

\bibitem{LM}
T. Lidman\ and\ A. H. Moore, Pretzel knots with $L$-space surgeries, Michigan Math. J. {\bf 65} (2016), no.~1, 105--130. 

\bibitem{N}
Y. Nakae, A good presentation of $(-2,3,2s+1)$-type pretzel knot group and $\Bbb R$-covered foliation, J. Knot Theory Ramifications {\bf 22} (2013), no.~1, 1250143, 23 pp. 

\bibitem{OS}  
P. S. Ozsv\'ath\ and\ Z. Szab\'o, Knot Floer homology and rational surgeries, Algebr. Geom. Topol. {\bf 11} (2011), no.~1, 1--68. 

\bibitem{V}
F. Vafaee, On the knot Floer homology of twisted torus knots, Int. Math. Res. Not. IMRN {\bf 2015}, no.~15, 6516--6537. 

\end{thebibliography}
\end{document}